\documentclass[conference]{IEEEtran}
\IEEEoverridecommandlockouts
\usepackage{cite}

\usepackage{mathtools}
\usepackage{amsmath,amssymb,amsfonts,amsthm}
\usepackage{bm}
\newtheorem{thm}{Theorem}[section]
\newtheorem{prop}[thm]{Proposition}
\newtheorem{lem}[thm]{Lemma}

\theoremstyle{definition}
\newtheorem{definition}[thm]{Definition}

\newcommand{\lsqparen}{%
	\bm{[}\kern-2pt{\scalebox{.5}[1]{\bm{$($}}}}
	
\newcommand{\rsqparen}{%
	\raisebox{-.4pt}[.1pt]{\scalebox{.5}[1]{\bm{$)$}}}\kern-2pt\bm{]}}

\newcommand{\biglsqparen}{%
	\bigl[\kern-4pt\bigl(}
	
\newcommand{\Biglsqparen}{%
	\Bigl[\kern-4.3pt\Big(}
	
\newcommand{\bigglsqparen}{%
	\biggl[\kern-4.9pt\bigg(}
	
\newcommand{\Bigglsqparen}{%
	\bm{\Biggl[}\kern-5pt\raisebox{0pt}[2pt]{\scalebox{.7}[1]{\bm{$\Bigg($}}}}
	
\usepackage{hyperref}

\begin{document}

\title{Linear Systems over Join-Blank Algebras}

\author{\IEEEauthorblockN{ Hayden Jananthan}
\IEEEauthorblockA{\textit{Mathematics Department} \\
\textit{Vanderbilt University}\\
Nashville, Tennessee \\
hayden.r.jananthan@vanderbilt.edu}
\and
\IEEEauthorblockN{Suna Kim}
\IEEEauthorblockA{\textit{Mathematics Department} \\
\textit{California Institute of Technology}\\
Pasadena, California \\
skim3@caltech.edu}
\and
\IEEEauthorblockN{Jeremy Kepner}
\IEEEauthorblockA{\textit{Lincoln Laboratory Supercomputing Center} \\
\textit{Massachusetts Institute of Technology}\\
Lexington, Massachusetts \\
kepner@ll.mit.edu}

}

\maketitle

\begin{abstract}
A central problem of linear algebra is solving linear systems. Regarding linear systems as equations over general semirings $(V,\oplus,\otimes,0,1)$ instead of rings or fields makes traditional approaches impossible. Earlier work shows that the solution space $X(\mathbf{A},\mathbf{w})$ of the linear system $\mathbf{A}\mathbf{v}=\mathbf{w}$ over the class of semirings called join-blank algebras is a union of closed intervals (in the product order) with a common terminal point. 

In the smaller class of max-blank algebras, the additional hypothesis that the solution spaces of the $1\times 1$ systems $A\otimes v = w$ are closed intervals implies that $X(\mathbf{A},\mathbf{w})$ is a finite union of closed intervals. We examine the general case, proving that without this additional hypothesis, we can still make $X(\mathbf{A},\mathbf{w})$ into a finite union of \emph{quasi-intervals}.

\end{abstract}

\begin{IEEEkeywords}
linear algebra, matrices, lattices, linear systems
\end{IEEEkeywords}

\section{Introduction}

Linear algebra is a cornerstone of modern computation. In particular, one approach to solving a problem in application is reducing it to a linear-algebraic problem, such as carrying out a matrix multiplication, solving a linear system, or finding eigenvalues and eigenvectors.

As data become more varied, new mathematics must be developed to handle linear algebra over more general algebraic structures. For example, the need for a variety of data types to be supported exists in the context of polystore databases \cite{kepner2016} and prompted the creation of the Dynamic Distributed Dimensional Data Model (D4M) \cite{kepner2012} which provides a linear algebraic interface to graphs stored in NoSQL \cite{byun2012,kepner2013}, SQL \cite{wu2014,gadepally2015}, and NewSQL \cite{samsi2016}.

One of the most general algebraic structures over which linear algebra makes sense is a semiring. Semirings include many algebraic structures that we often encounter -- in particular, all rings and fields are semirings. Among the most studied semirings which are not rings include the \emph{max-plus algebra} $\mathbb{R}\cup \{\infty,-\infty\}$, which forms a semiring with addition $\max$ and multiplication $+$, and the \emph{max-min algebra} $\mathbb{R}\cup \{\infty,-\infty\}$, which forms a semiring with addition $\max$ and multiplication $\min$ \cite{akian2006, litvinov2009}.

In fact, mathematicians and scientist have found numerous applications of max-plus algebra; it is widely used fields like in performance evaluation of manufacturing systems, discrete event system theory, Markov decision processes, and even in language theory \cite{gaubert1997}.

Note that a semiring generalizes the notion of a ring by dropping the necessity of additive inverses existing. One method of dealing with the loss of subtraction is to make use of non-algebraic properties, particularly strong order-theoretic properties \cite{han2012}. Thus we focus on the semirings that are induced from ordered sets (as in max-plus algebra) and utilize those properties to characterize the solution set.

\section{Definitions}

The most basic object of study is that of a semiring. 

\begin{definition}[Semiring]
\cite{golan1999, gondran2007} A \emph{semiring} is a quintuple $(V,\oplus,\otimes,0,1)$ consisting of
\begin{enumerate}
\item an underlying set $V$,
\item two binary operations $\oplus$ (\emph{addition}) and $\otimes$ (\emph{multiplication}) on $V$, and
\item two elements $0$ and $1$ of $V$
\end{enumerate}
such that
\begin{enumerate}
\item $\oplus$ is associative, commutative, and has identity element $0$,
\item $\otimes$ is associative and has identity element $1$,
\item $\otimes$ distributes over $\oplus$, and
\item $0$ is a multiplicative annihilator.
\end{enumerate}
\end{definition}

Matrices and their operations can be defined over general semirings, in a similar way it is over fields like $\mathbb{R}$ or $\mathbb{C}$.

\begin{definition}[Matrices]
An $m\times n$ \emph{matrix} over a semiring $V$ is a map 
\begin{equation*}
\mathbf{A}: \{1,\ldots,m\} \times \{1,\ldots, n\} \to V
\end{equation*}

If $\mathbf{A}$ and $\mathbf{B}$ are two $m\times n$ matrices, their \emph{sum} is the $m\times n$ matrix $\mathbf{A}\oplus \mathbf{B}$ defined by
\begin{equation*}
(\mathbf{A}\oplus \mathbf{B})(i,j) = \mathbf{A}(i,j) \oplus \mathbf{B}(i,j)
\end{equation*}

If $\mathbf{A}$ is an $m\times n$ matrix and $\mathbf{B}$ is an $n\times p$ matrix, their \emph{product} is the $m\times p$ matrix $\mathbf{A}\mathbf{B}$ defined by
\begin{equation*}
\mathbf{A}\mathbf{B}(i,j) = \bigoplus_{k=1}^n{\mathbf{A}(i,k)\otimes \mathbf{B}(k,j)}
\end{equation*}
\end{definition}

Elements of the Cartesian product $V^n$ are identified with $n\times 1$ matrices over $V$.

\begin{definition}[Linear Systems]
An $m\times n$ \emph{linear system} over $V$ is an equation of the form $\mathbf{A}\mathbf{v} =\mathbf{w}$ where  $\mathbf{A}$ is a fixed $m\times n$ matrix, $\mathbf{w}$ is a fixed $m\times 1$ matrix, and $\mathbf{v}$ is a variable $n\times 1$ matrix.

The \emph{solution space} $X(\mathbf{A},\mathbf{w})$ of a linear system $\mathbf{A}\mathbf{v}=\mathbf{w}$ is the set
\begin{equation*}
X(\mathbf{A},\mathbf{w}) = \{ \mathbf{v} \mid \mathbf{A}\mathbf{v} =\mathbf{w}\}
\end{equation*}
\end{definition}

One nice class of semirings which is diametrically opposite of the notion of a ring is that of join-blank algebras, which make explicit and extended use of an underlying order by requiring that the underlying set be a complete lattice, the addition operation be binary supremum, and the multiplication operation satisfy an ``infinite-distributivity'' law.

\begin{definition}[Complete Lattice]
A pair $(V,\leq)$ of a set $V$ and a binary relation $\leq$ on $V$ is a \emph{complete lattice} if
\begin{enumerate}
\item $\leq$ is reflexive, antistymmetric, and transitive,
\item for any subset $U\subset V$ there exists a least element $\bigvee U$ greater than or equal to every element of $U$, called the \emph{join} or \emph{supremum} of $U$, and
\item for any subset $U\subset V$ there exists a greatest element $\bigwedge U$ less than or equal to every element of $U$, called the \emph{meet} or \emph{infimum} of $U$.
\end{enumerate}
\end{definition}

In the case of a two element set $\{u,v\}$, the join of $\{u,v\}$ is denoted
\begin{equation*}
\bigvee\{u,v\} = u \vee v 
\end{equation*}
and its meet is denoted
\begin{equation*}
\bigwedge\{u,v\} = u \wedge v
\end{equation*}
These binary operations $\vee$ and $\wedge$ are called \emph{join} and \emph{meet}, respectively. Semirings in which the underlying set and the operations have order-theoretic properties with respect to a fixed partial order allows for order-theoretic tools to be applied to the construction of solution sets in terms of intervals.

\begin{definition}[Join-Blank Algebra]
A \emph{join-blank algebra} is a semiring $(V,\vee,\otimes,-\infty,1)$ where
\begin{enumerate}
\item $V$ is a complete lattice with respect to some fixed order,
\item $\vee$ is the join with respect to that order,
\item $-\infty$ is the minimum element of $V$ with respect to that order, and
\item for any subset $U\subset V$ and element $v\in V$
\begin{equation*}
v \wedge \bigvee{U} = \bigvee \{ v \wedge u \mid u \in U\}
\end{equation*}
\end{enumerate}
\end{definition}

The max-plus algebra $(\mathbb{R}\cup \{-\infty,\infty\},\max,+,-\infty,0)$ and the max-min algebra $(\mathbb{R}\cup \{-\infty,\infty\},\max,\min,-\infty,\infty)$ are join-blank algebras.  Power set algebras $(\mathcal{P}(S),\cup,\cap,\emptyset,S)$ and more generally Heyting algebras form join-blank algebras. \cite{kepnerjananthan}

The order-theoretic properties of a join-blank algebra $V$ can be extended to the Cartesian product $V^n$.

\begin{definition}[Product Order]
Suppose $V$ is ordered by $\leq$. The \emph{product order} $\leq$ on $V^n$ is defined by
\begin{equation*}
\mathbf{v} \leq \mathbf{w} \quad \text{if and only if} \quad \mathbf{v}(i) \leq \mathbf{w}(i) \quad \text{for all $i$}
\end{equation*}
\end{definition}

\section{Join-Blank Structure Theorem}

The order-theoretic properties of a join-blank algebra $V$ extend to order-theoretic properties of $V^n$.

\begin{prop} \label{order-theoretic properties of product order} \cite{kepnerjananthan}
If $V$ is a complete lattice, then $V^n$ is a complete lattice. Moreover, if $U\subset V^n$ then $\bigvee{U}$ exists if and only if $\bigvee\{\mathbf{v}(i) \mid \mathbf{v} \in U \}$ exists for each $i$, in which case
\begin{equation*}
\left( \bigvee{U} \right)(i) = \bigvee\{\mathbf{v}(i) \mid \mathbf{v} \in U\}
\end{equation*}
\end{prop}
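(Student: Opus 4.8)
The plan is to prove the displayed formula first and then read off completeness of $V^n$ from it. Fix $U\subseteq V^n$ and, since $V$ is a complete lattice, set $s(i)=\bigvee\{\mathbf{v}(i)\mid\mathbf{v}\in U\}$ for each $i$, which defines an element $\mathbf{s}\in V^n$. First I would check that $\mathbf{s}$ is an upper bound of $U$: for every $\mathbf{v}\in U$ and every $i$ we have $\mathbf{v}(i)\le s(i)$ by definition of the coordinate join, and by the definition of the product order this says exactly $\mathbf{v}\le\mathbf{s}$. Next I would check minimality: if $\mathbf{w}\in V^n$ satisfies $\mathbf{v}\le\mathbf{w}$ for all $\mathbf{v}\in U$, then $\mathbf{v}(i)\le\mathbf{w}(i)$ for all $\mathbf{v}\in U$ and all $i$, so $\mathbf{w}(i)$ is an upper bound of $\{\mathbf{v}(i)\mid\mathbf{v}\in U\}$ in $V$ and hence $s(i)\le\mathbf{w}(i)$ for every $i$; that is, $\mathbf{s}\le\mathbf{w}$. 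Thus $\bigvee U$ exists and equals $\mathbf{s}$.

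For the ``only if'' direction and the equality in the stated generality, suppose instead that $\bigvee U$ exists in $V^n$. Since $\bigvee U\ge\mathbf{v}$ coordinatewise, $(\bigvee U)(i)$ is an upper bound of $\{\mathbf{v}(i)\mid\mathbf{v}\in U\}$ for each $i$. To see it is the least one, let $b\in V$ be any upper bound of $\{\mathbf{v}(i)\mid\mathbf{v}\in U\}$ and form $\mathbf{w}\in V^n$ with $\mathbf{w}(i)=b$ and $\mathbf{w}(j)=(\bigvee U)(j)$ for $j\ne i$; then $\mathbf{w}$ is an upper bound of $U$ in the product order, so $\bigvee U\le\mathbf{w}$ and in particular $(\bigvee U)(i)\le b$. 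Hence $\bigvee\{\mathbf{v}(i)\mid\mathbf{v}\in U\}$ exists and equals $(\bigvee U)(i)$.

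Finally, to conclude that $V^n$ is a complete lattice I would either run the exact dual of the first paragraph with meets in place of joins --- the coordinatewise meet $\mathbf{t}(i)=\bigwedge\{\mathbf{v}(i)\mid\mathbf{v}\in U\}$ is the infimum of $U$ by the same two-line argument --- or invoke the standard fact that a poset in which every subset has a supremum automatically has all infima, applied to $V^n$ using the first paragraph. Either route verifies the three complete-lattice axioms for $(V^n,\le)$.

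There is no serious obstacle here; the only points requiring a little care are the ``only if'' direction, where one must not assume the coordinate joins exist a priori but instead extract them from $\bigvee U$ via the coordinate-swapping element $\mathbf{w}$ above, and the routine verification that $\le$ on $V^n$ really is a partial order, which follows since reflexivity, antisymmetry, and transitivity all transfer coordinatewise from $V$.
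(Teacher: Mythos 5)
Your proof is correct: the coordinatewise construction of the join, the coordinate-swapping upper bound $\mathbf{w}$ for the ``only if'' direction, and the dualization (or the sups-imply-infs fact) for completeness are all sound, and you rightly flag that the only delicate point is not presupposing the coordinate joins in the converse direction. Note, however, that the paper itself offers no proof of this proposition --- it is quoted from \cite{kepnerjananthan} --- so there is nothing in the text to compare against; your argument is the standard one and would serve as a correct proof of the cited result.
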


The compatibility of $\vee$ and $\otimes$ with the order contribute to order-theoretic properties of $X(\mathbf{A},\mathbf{w})$:

\begin{prop} \label{order-theoretic properties of solution space} \cite{kepnerjananthan}
Suppose $\mathbf{A}\mathbf{v}=\mathbf{w}$ is a linear system over a join-blank algebra $V$. 
\begin{enumerate}[(a)]
\item $X(\mathbf{A},\mathbf{w})$ is closed under taking joins of non-empty subsets.
\item $X(\mathbf{A},\mathbf{w})$ is convex, so if $\mathbf{v}_1 \leq \mathbf{v}_2 \leq \mathbf{v}_3$ and $\mathbf{v}_1,\mathbf{v}_3 \in X(\mathbf{A},\mathbf{w})$, then $\mathbf{v}_2 \in X(\mathbf{A},\mathbf{w})$.
\end{enumerate}
\end{prop}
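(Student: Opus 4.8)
\emph{Proof proposal.} The plan is to deduce both parts from a single fact: the matrix--vector map $F\colon V^n\to V^m$, $F(\mathbf{v})=\mathbf{A}\mathbf{v}$, is monotone for the product orders on domain and codomain, and in fact preserves joins of \emph{non-empty} families. Monotonicity alone yields (b); preservation of non-empty joins yields (a). Both reduce to elementary properties of $\otimes$ relative to $\leq$.

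First I would record that $\otimes$ is monotone in each argument. Since $\oplus=\vee$, we have $a\leq b$ iff $a\vee b=b$; hence for $b\leq c$, two-sided distributivity of $\otimes$ over $\oplus$ (a semiring axiom) gives $a\otimes c=a\otimes(b\vee c)=(a\otimes b)\vee(a\otimes c)$, i.e.\ $a\otimes b\leq a\otimes c$, and symmetrically on the left. Since each entry $(\mathbf{A}\mathbf{v})(i)=\bigvee_{k=1}^{n}\mathbf{A}(i,k)\otimes\mathbf{v}(k)$ is assembled from these operations together with $\vee$, it is monotone in $\mathbf{v}$ with respect to the product order. Part (b) is then immediate: from $\mathbf{v}_1\leq\mathbf{v}_2\leq\mathbf{v}_3$ with $\mathbf{v}_1,\mathbf{v}_3\in X(\mathbf{A},\mathbf{w})$ we get $\mathbf{w}=\mathbf{A}\mathbf{v}_1\leq\mathbf{A}\mathbf{v}_2\leq\mathbf{A}\mathbf{v}_3=\mathbf{w}$, so $\mathbf{A}\mathbf{v}_2=\mathbf{w}$ by antisymmetry of $\leq$.

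For part (a), take a non-empty $U\subseteq X(\mathbf{A},\mathbf{w})$. Using Proposition~\ref{order-theoretic properties of product order} to write $(\bigvee U)(k)=\bigvee\{\mathbf{v}(k):\mathbf{v}\in U\}$, then the infinite-distributivity axiom of the join-blank algebra to distribute $\mathbf{A}(i,k)\otimes(-)$ across this (non-empty) join, then the standard grouping law for joins in a complete lattice, I would compute, for each $i$,
\begin{align*}
\left(\mathbf{A}\bigvee U\right)(i)
&= \bigvee_{k=1}^{n}\mathbf{A}(i,k)\otimes\bigvee_{\mathbf{v}\in U}\mathbf{v}(k) \\
&= \bigvee_{k=1}^{n}\bigvee_{\mathbf{v}\in U}\mathbf{A}(i,k)\otimes\mathbf{v}(k)
 = \bigvee_{\mathbf{v}\in U}\bigvee_{k=1}^{n}\mathbf{A}(i,k)\otimes\mathbf{v}(k) \\
&= \bigvee_{\mathbf{v}\in U}(\mathbf{A}\mathbf{v})(i)
 = \bigvee_{\mathbf{v}\in U}\mathbf{w}(i) = \mathbf{w}(i),
\end{align*}
the last equality holding because $U\neq\emptyset$. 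Hence $\mathbf{A}\bigvee U=\mathbf{w}$, so $\bigvee U\in X(\mathbf{A},\mathbf{w})$.

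The proposition is not hard, and the only step that carries content beyond bookkeeping with the product order is distributing $\mathbf{A}(i,k)\otimes(-)$ through the infinite join, which is exactly the defining infinite-distributivity property of join-blank algebras (and where the cited argument of \cite{kepnerjananthan} concentrates its effort). I would be careful to flag the two roles played by the non-emptiness hypothesis in (a): it keeps the index set over which we distribute non-empty, and it makes $\bigvee_{\mathbf{v}\in U}\mathbf{w}(i)$ equal to $\mathbf{w}(i)$ rather than the bottom element $-\infty$. This is precisely why $X(\mathbf{A},\mathbf{w})$ is in general \emph{not} closed under the empty join and need not contain $(-\infty,\dots,-\infty)$, and it is what forces the ``common terminal point'' phenomenon rather than a genuine sublattice structure.
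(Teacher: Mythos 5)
Your proof is correct, and since the paper only cites Proposition~\ref{order-theoretic properties of solution space} from \cite{kepnerjananthan} without reproducing an argument, there is nothing in the text to diverge from; your route (monotonicity of $\otimes$ in each argument gives convexity, preservation of non-empty joins gives closure under joins) is the standard one and is surely what the cited source does. The only point worth flagging is that your step distributing $\mathbf{A}(i,k)\otimes(-)$ over the infinite join relies on reading the join-blank axiom as $v\otimes\bigvee U=\bigvee\{v\otimes u\mid u\in U\}$, whereas the paper's Definition as printed states it with $\wedge$ in place of $\otimes$; this is evidently a typo in the paper (the ``blank'' in join-blank refers to the arbitrary multiplication), but you are right to be aware that your argument needs the $\otimes$ version. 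Your closing remark correctly isolates where non-emptiness of $U$ is genuinely used, namely in $\bigvee_{\mathbf{v}\in U}\mathbf{w}(i)=\mathbf{w}(i)$, which would otherwise collapse to $-\infty$.
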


This implies the following structure of $X(\mathbf{A},\mathbf{w})$ as a union of closed intervals with a common terminal point. Recall that a \emph{closed interval} is defined as
\begin{equation*}
[\mathbf{x},\mathbf{y}] = \{ \mathbf{z} \mid \mathbf{x} \leq \mathbf{z} \leq \mathbf{y}\}
\end{equation*}

\begin{thm}[Join-Blank Structure Theorem] \label{join-blank structure theorem} \cite{kepnerjananthan}
Suppose $\mathbf{A}\mathbf{v}=\mathbf{w}$ is a linear system over a join-blank algebra $V$. Then $X(\mathbf{A},\mathbf{w})$ is of the form 
\begin{equation*}
X(\mathbf{A},\mathbf{w}) = \bigcup_{\mathbf{v} \in U}{[\mathbf{v},\mathbf{x}]}
\end{equation*}
for some $U \subset X(\mathbf{A},\mathbf{w})$ and a fixed $\mathbf{x}$.
\end{thm}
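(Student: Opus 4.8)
The plan is to take $\mathbf{x}$ to be the supremum of the entire solution space and $U$ to be the solution space itself; the two nontrivial ingredients are already packaged in Propositions~\ref{order-theoretic properties of product order} and~\ref{order-theoretic properties of solution space}, so the proof will be short. First I would dispose of the degenerate case: if $X(\mathbf{A},\mathbf{w}) = \emptyset$, then the empty union (with $U = \emptyset$) gives the claim, so from now on assume $X(\mathbf{A},\mathbf{w}) \neq \emptyset$.

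Next, since $V$ is a complete lattice, Proposition~\ref{order-theoretic properties of product order} tells us $V^n$ is a complete lattice, so $\mathbf{x} := \bigvee X(\mathbf{A},\mathbf{w})$ exists in $V^n$. Because $X(\mathbf{A},\mathbf{w})$ is non-empty, Proposition~\ref{order-theoretic properties of solution space}(a), closure under joins of non-empty subsets, yields $\mathbf{x} \in X(\mathbf{A},\mathbf{w})$; this is the common terminal point. I would then set $U := X(\mathbf{A},\mathbf{w})$ and check the two inclusions. For $X(\mathbf{A},\mathbf{w}) \subseteq \bigcup_{\mathbf{v} \in U}[\mathbf{v},\mathbf{x}]$: any $\mathbf{v} \in X(\mathbf{A},\mathbf{w})$ satisfies $\mathbf{v} \leq \mathbf{x}$ by definition of the join, so $\mathbf{v} \in [\mathbf{v},\mathbf{x}]$. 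For the reverse inclusion: if $\mathbf{z} \in [\mathbf{v},\mathbf{x}]$ for some $\mathbf{v} \in U$, then $\mathbf{v} \leq \mathbf{z} \leq \mathbf{x}$ with $\mathbf{v},\mathbf{x} \in X(\mathbf{A},\mathbf{w})$, so convexity (Proposition~\ref{order-theoretic properties of solution space}(b)) forces $\mathbf{z} \in X(\mathbf{A},\mathbf{w})$.

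I do not expect any genuine obstacle here once the cited propositions are available: the only real \emph{choice} in the argument is that of $\mathbf{x}$, and it is forced to be $\bigvee X(\mathbf{A},\mathbf{w})$. The mild subtlety worth stating explicitly is that the supremum of the whole solution space is itself a solution, which is exactly what part (a) provides and which fails for general subsets of $V^n$. If one wanted a more economical index set $U$ (for instance the minimal elements of $X(\mathbf{A},\mathbf{w})$, assuming they exist and every solution lies above one of them), that would take an additional argument, but the statement as phrased asks only for \emph{some} $U \subseteq X(\mathbf{A},\mathbf{w})$, so taking $U = X(\mathbf{A},\mathbf{w})$ suffices.
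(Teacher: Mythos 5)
Your proof is correct and follows exactly the route the paper intends: the theorem is stated as an immediate consequence of Propositions~\ref{order-theoretic properties of product order} and~\ref{order-theoretic properties of solution space} (the paper itself defers the details to the cited reference), and your choice of $\mathbf{x} = \bigvee X(\mathbf{A},\mathbf{w})$ with $U = X(\mathbf{A},\mathbf{w})$, using closure under non-empty joins for membership of $\mathbf{x}$ and convexity for the reverse inclusion, is precisely that deduction.
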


This structure allows for the problem of finding the solution space $X(\mathbf{A},\mathbf{w})$ to be reduced to finding the solution spaces 
\begin{equation*}
X(\mathbf{A}(i,:),\mathbf{w}(i))
\end{equation*}
to the single-equation linear systems 
\begin{equation*}
\mathbf{A}(i,:) \mathbf{v} = \mathbf{w}(i)
\end{equation*}
Intersecting each solution set will give us the complete solution set for the original linear system, as the intersection will satisfy all equations.

\section{Max-Blank Structure Theorem}

When $V$ is \emph{totally-ordered} and hence $\vee = \max$, we call $V$ a \emph{max-blank algebra}. 

In nice cases, the solution space of a linear system over a max-blank algebra is a finite union of closed intervals.  However, it is not always the case. Consider the following system in max-blank algebra:

\begin{equation*}
\left[ \begin{matrix} \infty \end{matrix} \right] \left[ \begin{matrix} v \end{matrix}\right] = \left[ \begin{matrix} \infty \end{matrix}\right]
\end{equation*}
The solution space is given by
$(-\infty,\infty]$, which cannot be  written as finite unions of intervals.
In the $1\times 1$ case, the solution set can be represented with finitely many closed intervals if
\begin{equation*}
X(A,w) = \{ v \mid A \otimes v = w\}
\end{equation*}
is a closed interval.

\begin{thm}[Max-Blank Structure Theorem for Closed Intervals] \label{max-blank structure theorem for closed intervals} \cite{kepnerjananthan}
Suppose $\mathbf{A}\mathbf{v} = \mathbf{w}$ is a linear system over a max-blank algebra such that for every $i,j$ the set
\begin{equation*}
\{ v \mid \mathbf{A}(i,j) \otimes \mathbf{w}(i)\}
\end{equation*}
is a closed interval. Then $X(\mathbf{A},\mathbf{w})$ is a finite union of closed intervals.
\end{thm}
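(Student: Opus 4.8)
The plan is to reduce the problem to single-equation systems using the remarks following the Join-Blank Structure Theorem, then handle a single equation $\mathbf{A}(i,:)\mathbf{v} = \mathbf{w}(i)$ by exploiting the fact that each $X(\mathbf{A}(i,j),\mathbf{w}(i))$ is a closed interval and that we are in the totally-ordered setting. First I would observe that
\begin{equation*}
X(\mathbf{A},\mathbf{w}) = \bigcap_{i=1}^m X(\mathbf{A}(i,:),\mathbf{w}(i)),
\end{equation*}
and recall that a finite intersection of finite unions of closed intervals is again a finite union of closed intervals (intervals are convex and the intersection of two closed intervals in a poset is a closed interval or empty), so it suffices to prove the theorem when $m = 1$.

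Next I would analyze a single equation $\bigvee_{j=1}^n \mathbf{A}(1,j)\otimes \mathbf{v}(j) = w$, writing $A_j = \mathbf{A}(1,j)$ and $w = \mathbf{w}(1)$. Because $\vee = \max$ on a totally-ordered $V$, a vector $\mathbf{v}$ solves this equation exactly when (i) $A_j \otimes \mathbf{v}(j) \leq w$ for every $j$, and (ii) $A_j \otimes \mathbf{v}(j) = w$ for at least one $j$. Condition (i) defines a set which, by the hypothesis and order-theoretic properties of $\otimes$, is a product $\prod_j D_j$ where each $D_j = \{v \mid A_j \otimes v \leq w\}$ is a down-set; since $X(A_j, w)$ is a closed interval $[\ell_j, u_j]$ and $\otimes$ is order-compatible, $D_j$ is itself a closed interval $[-\infty, u_j]$ (or all of $V$, or empty). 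For condition (ii), I would fix the index $k$ achieving equality and intersect: the set of $\mathbf{v}$ with $A_k\otimes \mathbf{v}(k) = w$ and $A_j \otimes \mathbf{v}(j) \leq w$ for all $j$ is the product of the closed interval $X(A_k, w) = [\ell_k, u_k]$ in coordinate $k$ with the closed intervals $D_j$ in the remaining coordinates — hence a closed interval (a box) in $V^n$. Taking the union over $k = 1,\ldots,n$ exhibits $X(\mathbf{A}(1,:), w)$ as a union of at most $n$ closed intervals.

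Finally I would assemble the pieces: with $m=1$ settled, the general case follows by the intersection formula above, and since each factor is a union of at most $n$ closed intervals, $X(\mathbf{A},\mathbf{w})$ is a finite union of closed intervals.

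I expect the main obstacle to be the careful bookkeeping in the single-equation case: one must verify that the "at most one coordinate attains the maximum" decomposition genuinely produces \emph{closed} intervals rather than half-open ones, which is exactly where the hypothesis that each $X(A_j, w)$ is a closed interval (rather than merely convex) is used, and where total order is essential so that "the max equals $w$" localizes to a single summand. One should also check the degenerate cases — $A_j \otimes v$ never equalling $w$, or always lying below $w$, or $X(A_j,w)$ empty — to confirm the boxes are still closed intervals or drop out harmlessly; none of these is deep, but all must be stated.
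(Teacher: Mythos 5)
Your proposal is correct and follows essentially the same route the paper takes for its generalization (Theorem~\ref{max-blank structure theorem}): solve each row equation by fixing the index where the maximum is attained, obtaining a finite union of boxes $X(A_k,w)\times\prod_{j\neq k}[-\infty,u_j]$, then intersect across rows and use that products and intersections of closed intervals are closed intervals. The closed-interval hypothesis here is exactly the case where all the half-open factors $(p^i_j,q^i_j]$ of the general argument are absent, so no new ideas are needed.
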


The crucial step in the proof of Theorem~\ref{max-blank structure theorem for closed intervals} is that a Cartesian product of closed intervals in $V$ is a closed interval in $V^n$ in the product order.

Proposition~\ref{order-theoretic properties of solution space} shows that the only other form that
\begin{equation*}
\{ v \mid \mathbf{A}(i,j) \otimes \mathbf{w}(i)\}
\end{equation*}
can take on is a half-open interval which is open on the left. The Cartesian product of arbitrary intervals in $V$ need to be an interval in $V^n$ in the product order.

This motivates a slightly more general basic object than intervals.

\begin{definition}[Quasi-interval]
Suppose $I_1,\ldots, I_n$ are intervals in $V$. Then define
\begin{equation*}
\prescript{}{A}{\lsqparen} \mathbf{p},\mathbf{q} {\rsqparen}_{\raisebox{-1pt}{\footnotesize $B$}} = I_1 \times \cdots \times I_n
\end{equation*}
where $I_k$ has endpoints $\mathbf{p}(k)$ and $\mathbf{q}(k)$, with exclusion of $\mathbf{p}(k)$ when $k\in A$ and exclusion of $\mathbf{q}(k)$ when $k\in B$.
\end{definition}

\begin{lem} \label{intersection of quasi-intervals}
Suppose $V$ is totally ordered. Suppose $A,B \subset \{1,\ldots,n\}$ and $\mathbf{p},\mathbf{q},\mathbf{r},\mathbf{s} \in V^n$. Then
\[
\prescript{}{A}{\lsqparen} \mathbf{p},\mathbf{q} ] \cap \prescript{}{B}{\lsqparen} \mathbf{r},\mathbf{s}] = \prescript{}{C}{\lsqparen} \mathbf{p} \vee \mathbf{r}, \mathbf{q} \wedge \mathbf{s} ]
\]
where
\begin{align*}
C & = \{ i \in A \setminus B \mid \mathbf{p}(i) \geq \mathbf{r}(i)\} \\
& \quad \cup \{j \in B \setminus A \mid \mathbf{p}(j) \leq \mathbf{r}(j)\} \\
& \quad \cup A\cap B
\end{align*}
\end{lem}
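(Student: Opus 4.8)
The plan is to reduce the $n$-dimensional statement to a coordinatewise one, since both the quasi-interval notation and the intersection of products decompose along coordinates. Concretely, for each $k$ the $k$-th coordinate factor of the left-hand side is $I_k \cap J_k$, where $I_k$ is the interval from $\mathbf{p}(k)$ to $\mathbf{q}(k)$ with $\mathbf{p}(k)$ excluded iff $k\in A$ (and $\mathbf{q}(k)$ always included), and similarly $J_k$ from $\mathbf{r}(k)$ to $\mathbf{s}(k)$ with $\mathbf{r}(k)$ excluded iff $k\in B$. So it suffices to prove: for intervals in a totally ordered $V$, the intersection of a left-$a$-type interval with right endpoint $q$ and a left-$b$-type interval with right endpoint $s$ is the interval from $p\vee r$ to $q\wedge s$, where the left end is excluded precisely according to membership of $k$ in the set $C$ described. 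Once the single-coordinate identity is established, taking the Cartesian product over $k$ and recognizing the resulting tuple of left endpoints as $\mathbf{p}\vee\mathbf{r}$ (by Proposition~\ref{order-theoretic properties of product order}, componentwise join) and of right endpoints as $\mathbf{q}\wedge\mathbf{s}$ finishes the proof, with the exclusion set being exactly the union of the per-coordinate exclusion decisions, i.e.\ $C$.

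For the one-dimensional core, I would argue as follows. Because $V$ is totally ordered, the intersection of the two intervals is again an interval (possibly empty), and its endpoints are the larger of the two lower endpoints, $p\vee r$, and the smaller of the two upper endpoints, $q\wedge s$; this is the standard fact that on a chain $\{x : p \mathbin{\le_?} x \le q\}\cap\{x : r \mathbin{\le_?} x \le s\}$ collapses to a single interval. The only subtlety is the open/closed status of the new lower endpoint $p\vee r$. Split into three cases by trichotomy: if $p > r$ then near $p\vee r = p$ the binding constraint is the first interval's lower constraint, so the new interval excludes its lower endpoint iff the first one did, i.e.\ iff $k\in A$; symmetrically if $p < r$ the status is inherited from the second interval, i.e.\ exclude iff $k\in B$; and if $p = r$ the point $p\vee r$ lies in the intersection iff it lies in \emph{both} intervals, so it is excluded iff $k\in A$ \emph{or} $k\in B$. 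Matching these three verdicts against the definition of $C$ — $k\in C$ when ($k\in A\setminus B$ and $p(k)\ge r(k)$), or ($k\in B\setminus A$ and $p(k)\le r(k)$), or $k\in A\cap B$ — shows that in every case $k\in C$ exactly when $p\vee r$ should be excluded from coordinate $k$. (The cases $p\ge r$ versus $p\le r$ overlap only when $p=r$, but there both disjuncts reduce to $k\in A\cap B$-type conditions consistent with the $A\cap B$ clause, so no double-counting issue arises.)

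The right endpoint requires no case analysis: $\mathbf{q}(k)$ and $\mathbf{s}(k)$ are both included, so $\mathbf{q}(k)\wedge\mathbf{s}(k)$ is included, consistent with the right bracket $]$ on the right-hand side, and no index is ever placed in a ``right-exclusion'' set. I would also dispatch the degenerate situation where the resulting interval is empty (when $p\vee r > q\wedge s$, or when the endpoints coincide but are excluded): in the notation $\prescript{}{C}{\lsqparen}\mathbf{x},\mathbf{y}]$ an empty factor makes the whole product empty, and one checks that the left-hand side is then empty too, so the identity still holds — this is worth a sentence but is routine.

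The main obstacle I anticipate is purely bookkeeping: making the trichotomy on $p(k)$ versus $r(k)$ mesh cleanly with the slightly asymmetric way $C$ is written (two ``strict difference'' clauses using $\ge$ and $\le$ plus the symmetric $A\cap B$ clause), and being careful that the weak inequalities in the definition of $C$ do the right thing at the boundary $p(k)=r(k)$ without conflicting with the $A\cap B$ clause. There is no deep idea here — the total order is doing all the work, guaranteeing that intersections of intervals stay intervals and that ``larger lower endpoint'' and ``smaller upper endpoint'' are well defined — so the write-up is mostly a matter of organizing the cases transparently.
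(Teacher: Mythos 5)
Your proposal is correct and follows essentially the same route as the paper's own proof: decompose both quasi-intervals as products, intersect coordinatewise, observe that on a chain each factor $I_k\cap J_k$ is an interval with endpoints $\mathbf{p}(k)\vee\mathbf{r}(k)$ and $\mathbf{q}(k)\wedge\mathbf{s}(k)$, and verify by the same three-way case analysis on $\mathbf{p}(k)$ versus $\mathbf{r}(k)$ that the left endpoint is excluded exactly when $k\in C$. Your treatment is in fact slightly more careful than the paper's (you address the empty-intersection degeneracy explicitly, and you state the resulting endpoints correctly, whereas the paper's final displayed formula omits the join and meet in the endpoints), but there is no difference in method.
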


\begin{proof}
Let 
\[
\prescript{}{A}{\lsqparen} \mathbf{p},\mathbf{q}] = I_1 \times \cdots \times I_n
\]
and
\[
\prescript{}{B}{\lsqparen} \mathbf{r},\mathbf{s}] = J_1 \times \cdots \times J_n
\]
Then
\begin{align*}
\prescript{}{A}{\lsqparen} \mathbf{p}, \mathbf{q}] \cap \prescript{}{B}{\lsqparen} \mathbf{r},\mathbf{s}] &= (I_1 \times \cdots \times I_n) \cap (J_1 \times \cdots \times J_n)
\\
&= (I_1 \cap J_1) \times \cdots \times (I_n\cap J_n)
\end{align*}
Since the intersection of intervals is also interval, each of $I_k \cap J_k$ is an interval with end-points $\mathbf{p}(k) \vee \mathbf{r}(k)$ and $\mathbf{q}(k) \wedge \mathbf{s}(k)$ with exclusion of the first end-point $\mathbf{p}(k) \vee \mathbf{r}(k)$ exactly when either
\begin{enumerate}[(i)]
\item $k \in A \setminus B$ and $\mathbf{p}(k) \geq \mathbf{r}(k)$, or
\item $k\in B \setminus A$ and $\mathbf{p}(k) \leq \mathbf{r}(k)$, or
\item $k \in A \cap B$.
\end{enumerate}
and inclusion of the second end-point $\mathbf{q}(k) \wedge \mathbf{s}(k)$. Hence
\[
\prescript{}{A}{\lsqparen} \mathbf{p}, \mathbf{q}] \cap \prescript{}{B}{\lsqparen} \mathbf{r},\mathbf{s}] = \prescript{}{C}{\lsqparen} \mathbf{p}, \mathbf{q}] 
\]
where
\begin{align*}
C & = \{ i \in A \setminus B \mid \mathbf{p}(i) \geq \mathbf{r}(i)\} \\
& \quad \cup \{j \in B \setminus A \mid \mathbf{p}(j) \leq \mathbf{r}(j)\} \\
& \quad \cup (A\cap B)
\end{align*}
\end{proof}

Using this new notation, we show that linear systems in max-blank algebra have solution set that can be written as union of finite quasi-interval.

\begin{thm}[Max-blank Structure Theorem] \label{max-blank structure theorem}
Suppose $\mathbf{A}$ is an $n\times  m$ matrix and $\mathbf{w}$ an element of $V^n$. Let $U_i$, $U_{i,1}$, and $U_{i,2}$ be the sets of $j\in \{1,\ldots, m\}$ such that
\[
  X(\mathbf{A}(i,j),\mathbf{w}(i)) = \{ v \in V \mid \mathbf{A}(i,j)\otimes v = \mathbf{w}(i)\}
\]
is non-empty, non-empty and not a closed interval, and non-empty and a closed interval, respectively.  When $j\in U_{i,1}$, let
  $X(\mathbf{A}(i,j),\mathbf{w}(i))=(p_j^i,q_j^i]$. For $j\in U_{i,2}$, let
$X(\mathbf{A}(i,j),\mathbf{w}(i)) = [p_j^i,q_j^i]$.
Lastly, for $j\notin U_i$ let $q_j^i$ be the largest element such that
  $\mathbf{A}(i,j)\otimes q_j^i\leq \mathbf{w}(i)$.
  \\
Let $\mathbf{p}_{i,j'}$ be defined by
\[
  \mathbf{p}_{i,j'}(j)=\begin{cases} p_{j'}^i & j=j' \\ - \infty & \text{otherwise} \end{cases}
\]
and $\mathbf{q}_i$ be defined by
  $\mathbf{q}_i(j)=q_j^i$. Then
\[
X(\mathbf{A},\mathbf{w})=\bigcup_{\substack{j'_{i,k} \in U_{i,k} \\ \text{for $1\leq i\leq n$} \\ \text{and $k\in \{1,2\}$}}}{\prescript{}{J_\mathbf{j}}{\Bigglsqparen} \bigvee_{1\leq i\leq n, k\in \{1,2\}}{\mathbf{p}_{i,j_{i,k}'}}, \bigwedge_{1\leq i\leq n}{\mathbf{q}_i} \Biggr]}
\]
where $\mathbf{j} = (j'_{i,k})_{1\leq i\leq n, k\in \{1,2\}}$ and $\ell \in J_\mathbf{j}$ if and only if 
\[
\max_{1\leq i\leq n}{\mathbf{p}_{i,j_{i,2}}} \leq \max_{1\leq i\leq n}{\mathbf{p}_{i,j_{i,1}}}
\]
\end{thm}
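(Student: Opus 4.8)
The plan is to run the reduction to single-equation systems that the paper has already flagged: solve each $\mathbf{A}(i,:)\mathbf{v}=\mathbf{w}(i)$ as a finite union of quasi-intervals, intersect over $i$, and collapse the resulting intersections with Lemma~\ref{intersection of quasi-intervals}.

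\emph{Single equations.} Fix a row $i$. Since $V$ is totally ordered, the finite join $\bigvee_j\mathbf{A}(i,j)\otimes\mathbf{v}(j)$ is the largest of its terms, so $\mathbf{v}$ solves $\mathbf{A}(i,:)\mathbf{v}=\mathbf{w}(i)$ if and only if (a) $\mathbf{A}(i,j)\otimes\mathbf{v}(j)\leq\mathbf{w}(i)$ for all $j$, and (b) $\mathbf{A}(i,j)\otimes\mathbf{v}(j)=\mathbf{w}(i)$ for some $j$. Multiplication is monotone and finitely distributes over $\vee$, which gives $\mathbf{A}(i,j)\otimes v\leq\mathbf{w}(i)\iff v\leq q_j^i$ (when $j\in U_i$ one uses that the top $q_j^i$ of $X(\mathbf{A}(i,j),\mathbf{w}(i))$ is attained); so (a) is the single condition $\mathbf{v}\leq\mathbf{q}_i$, while (b) forces $j\in U_i$ and reads $\mathbf{v}(j)\in X(\mathbf{A}(i,j),\mathbf{w}(i))$. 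Hence
\[
X(\mathbf{A}(i,:),\mathbf{w}(i))=\bigcup_{j\in U_i}\bigl(\{\mathbf{v}:\mathbf{v}\leq\mathbf{q}_i\}\cap\{\mathbf{v}:\mathbf{v}(j)\in X(\mathbf{A}(i,j),\mathbf{w}(i))\}\bigr),
\]
and the $j$-th term of this finite union is exactly the quasi-interval $\prescript{}{A_{i,j}}{\lsqparen}\mathbf{p}_{i,j},\mathbf{q}_i]$ with $A_{i,j}=\{j\}$ if $j\in U_{i,1}$ and $A_{i,j}=\emptyset$ if $j\in U_{i,2}$: in coordinate $j$ the factor is $X(\mathbf{A}(i,j),\mathbf{w}(i))$ (already bounded above by $q_j^i$), and in every other coordinate $j'$ it is $[-\infty,q_{j'}^i]$.

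\emph{Intersecting the rows.} As $\mathbf{A}\mathbf{v}=\mathbf{w}$ holds iff all $n$ row equations do, $X(\mathbf{A},\mathbf{w})=\bigcap_{i=1}^{n}X(\mathbf{A}(i,:),\mathbf{w}(i))$, and distributing this finite intersection across the finite unions above yields
\[
X(\mathbf{A},\mathbf{w})=\bigcup_{(j_i)_i,\ j_i\in U_i}\ \bigcap_{i=1}^{n}\prescript{}{A_{i,j_i}}{\lsqparen}\mathbf{p}_{i,j_i},\mathbf{q}_i].
\]
Every quasi-interval here includes all of its right endpoints, so Lemma~\ref{intersection of quasi-intervals} applies; iterating it by induction on $n$ collapses each inner intersection to a single quasi-interval with lower endpoint $\bigvee_i\mathbf{p}_{i,j_i}$, upper endpoint $\bigwedge_i\mathbf{q}_i$, and some left-exclusion set $C=C\bigl((j_i)_i\bigr)$. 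Regrouping the outer index by recording, for each $i$, whether $j_i\in U_{i,1}$ or $j_i\in U_{i,2}$ rewrites the union in the indexed shape of the theorem, so it remains only to identify $C$ with $J_{\mathbf j}$; finiteness is automatic since each $U_i\subseteq\{1,\dots,m\}$ is finite.

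\emph{The exclusion set, and the expected obstacle.} Reading off the iterated intersection coordinate by coordinate, $\ell\in C$ exactly when the lower endpoint there, $\max\{p_\ell^i:j_i=\ell\}$, is attained only by rows $i$ whose chosen witness $j_i=\ell$ is open-type ($j_i\in U_{i,1}$): a closed-type witness attaining that maximum keeps $\ell$ out of $C$, an open-type one forces $\ell$ in, and ties are resolved by exclusion. Splitting that maximum into its closed-type and open-type contributions turns this into the inequality defining $J_{\mathbf j}$. I expect the bookkeeping of these left-open coordinates, together with the degenerate cases --- $U_i=\emptyset$ (an inconsistent system, giving the empty union), a coordinate $\ell$ that witnesses no row (where the lower endpoint is the bottom $-\infty$ and must be retained), and rows with $U_{i,1}$ or $U_{i,2}$ empty --- to be the one genuinely delicate point; the reduction to single equations and the repeated application of Lemma~\ref{intersection of quasi-intervals} are routine.
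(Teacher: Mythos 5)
Your proposal follows essentially the same route as the paper's own proof: reduce to the single row equations via the ``all $\leq$, some $=$'' characterization, write each row's solution set as a finite union of quasi-intervals $\prescript{}{A_{i,j}}{\lsqparen}\mathbf{p}_{i,j},\mathbf{q}_i]$, distribute the intersection over the rows, and collapse each resulting intersection with Lemma~\ref{intersection of quasi-intervals}. If anything, your indexing (one witness $j_i\in U_i$ per row, then regrouping) is the cleaner form of the distribution, and your flagged delicate points (identifying the exclusion set $J_{\mathbf{j}}$ and the degenerate cases) are exactly where the paper's own write-up is also at its tersest.
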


\begin{proof}
The proof will consist of finding the solution space of 
$\max_{j\in \{1,\ldots, m\}}{(\mathbf{A}(i,j)\otimes \mathbf{v}(j))}=\mathbf{w}(i)$
for each $i\in \{1,\ldots, n\}$ and showing that it is the union of intervals with a common (inclusive) terminal point. Taking the intersection of these solution spaces is $X(\mathbf{A},\mathbf{w})$.

$X([v],[u])$ is convex with a inclusive terminal point. Since $V$ is a complete lattice, it follows that 
\[
X(\mathbf{A}(i,j),\mathbf{w}(i)) = [p_j^i,q_j^i]) \quad \text{or} \quad X(\mathbf{A}(i,j),\mathbf{w}(i)) = (p_j^i,q_j^i] 
\]
for some $p_j^i$ and $q_j^i$, assuming that $X(\mathbf{A}(i,j),\mathbf{w}(i))$ is non-empty. 

Let define $U_i$, $U_{i,1}$, and $U_{i,2}$ as in the proposition. For $j\notin U_i$, let $q_j^i$ be the largest element such that $\mathbf{A}(i,j) \otimes q_j^i \leq \mathbf{w}(i)$. Such an element exists since $-\infty \otimes v = -\infty$ and multiplication by a fixed element is a monotonic map.

The solution space now can be written down in terms of the elements $p_j^i$ and $q_j^i$. A given $\mathbf{v}$ is in the solution set if and only if there exists a $j'\in \{1,\ldots, m\}$ such that
\[
  \mathbf{A}(i,j')\otimes \mathbf{v}(j')=\mathbf{w}(i)
\]
and for all $j\in \{1,\ldots, m\}$ it is true that
\[
  \mathbf{A}(i,j)\otimes \mathbf{v}(j)\leq \mathbf{w}(i)
\]
The first condition is that $\mathbf{v}(j')\in f_{\mathbf{A}(i,j')}^{-1}(\mathbf{w}(i))$
and the second condition is that
$\mathbf{v}(j)\in [- \infty,q_j^i]$
because multiplication by a fixed element is a monotonic function. Then the solution space can be written as 
\[
\bigcup_{j'\in U_i}{\left( X(\mathbf{A}(i,j'),\mathbf{w}(i)) \times  \prod_{j\in \{1,\ldots,m\},j\neq j'}{[- \infty,q_j^i]}\right)} \]
\[= \bigcup_{j'\in U_{i,1}}{\left( (p_{j'}^i,q_{j'}^i]\times  \prod_{j\in \{1,\ldots,m\},j\neq j'}{[- \infty,q_j^i]}\right)} \]
\[
\quad \cup \bigcup_{j'\in U_{i,2}}{\left( [p_{j'}^i,q_{j'}^i]\times  \prod_{j\in \{1,\ldots,m\},j\neq j'}{[- \infty,q_j^i]}\right)}
\]

Taking
$\mathbf{p}_{i,j'}(j)$ and 
$\mathbf{q}_i(j)$ as defined before, we get
\[
[p_{j'}^i,q_{j'}^i] \times \prod_{j\in \{1,\ldots,m\},j\neq j'}{[-\infty,q_j^i]} = [\mathbf{p}_{i,j'},\mathbf{q}_i]
\]
and
\[
(p_{j'}^i,q_{j'}^i] \times \prod_{j\in \{1,\ldots,m\},j \neq j'}{[-\infty,q_j^i]} = \prescript{}{j'}{\lsqparen} \mathbf{p}_{i,j'},\mathbf{q}_i]
\]

Let
\[I_{i,j} = \begin{cases} \{ j\} & \text{if $j\in U_{i,1}$} \\ \emptyset & \text{otherwise} \end{cases}
\]

Using this, we can change the order of intersection with union to get
\begin{align*}
X(\mathbf{A},\mathbf{w})  &= \bigcap_{i=1}^n{\left( \bigcup_{j'\in U_{i,1}}{\prescript{}{j'}{\lsqparen} \mathbf{p}_{i,j'},\mathbf{q}_i]} \cup \bigcup_{j' \in U_{i,2}}{[\mathbf{p}_{i,j'},\mathbf{q}_i]}\right)} \\
 &= \bigcup_{\substack{j'_{i,k} \in U_{i,k} \\ \text{for $1\leq i\leq n$} \\ \text{and $k\in \{1,2\}$}}}{ \bigcap_{1\leq i\leq n, k\in \{1,2\}}{\prescript{}{I_{i,j'_{i,k}}}{\lsqparen} \mathbf{p}_{i,j'_{i,k}}, \mathbf{q}_i]}} \\
\end{align*}

Finally, since the intersection of quasi-intervals can be represented as a quasi-interval as well (Lemma~\ref{intersection of quasi-intervals}), we arrive at
\[
X(\mathbf{A},\mathbf{w})=\bigcup_{\substack{j'_{i,k} \in U_{i,k} \\ \text{for $1\leq i\leq n$} \\ \text{and $k\in \{1,2\}$}}}{ \prescript{}{J_\mathbf{j}}{\Bigglsqparen} \bigvee_{1\leq i\leq n, k\in \{1,2\}}{\mathbf{p}_{i,j'_{i,k}}}, \bigwedge_{1\leq i\leq n}{\mathbf{q}_i}\Biggr]}
\]
where $\mathbf{j} = (j'_{i,k})_{1\leq i\leq n, k\in \{1,2\}}$ and $\ell \in J_\mathbf{j} \subset \{1,\ldots, n\}$ if and only if 
\[
\max_{1\leq i\leq n}{\mathbf{p}_{i,j'_{i,2}}(\ell)} \leq \max_{1\leq i \leq n}{\mathbf{p}_{i,j'_{i,1}}(\ell)}
\]
\end{proof}

\section{Further Research}

The notion of a quasi-interval allows the structure of the solution space of a linear system over a max-blank algebra to be written as a \emph{finite} union of quasi-intervals. This naturally leads to the question of how this notion can be used to express other solution spaces in similarly nice ways. 

While the maximum solution of a linear system is known in many cases, particularly for Heyting algebras (a join-blank algebra in which $\otimes$ is the meet) and max-blank algebras, the entire structure is not known for arbitrary Heyting algebras. 

Also worth investigating is how crucial each of the properties a join-blank algebra satisfies are to Theorem~\ref{join-blank structure theorem}.

\bibliographystyle{ieeetr}

\end{document}